\newtheorem{prop}{Proposition}
\newtheorem{theorem}[prop]{Theorem}
\newtheorem{cor}[prop]{Corollary}
 \newcommand{\Q}{\ensuremath{\mathbb{Q}}}
 \newcommand{\C}{\ensuremath{\mathbb{C}}}
\newcommand{\Gal}{{\rm Gal}}
\newcommand{\Ker}{{\rm Ker \,}}
\newcommand{\Reg}{{\rm Reg }}
\begin{document}
\title{Are number fields determined by Artin $L$-functions?}
\author{J\"urgen Kl\"uners}
\address{Mathematisches Institut der Universit\"at Paderborn,
Warburger Str. 100, 33098 Paderborn, Germany}
\email{klueners@math.uni-paderborn.de}
\author{Florin Nicolae}
\address{``Simion Stoilow'' Institute of Mathematics of the
Romanian Academy, P.O.BOX 1-764,RO-014700 Bucharest, Romania}
\email{florin.nicolae@imar.ro}

\begin{abstract}
Let $k$ be a number field, $K/k$ a finite Galois extension with Galois group $G$, $\chi$ a faithful character of $G$. 
We prove that the Artin L-function $L(s,\chi,K/k)$ determines the Galois closure of $K$ over $\Q$. In the special case
$k=\Q$ it also determines the character $\chi$.

{\it Key words:} Number fields, Galois extension, Artin L-function

{\it Mathematics Subject Classification:} 11R42
\end{abstract}
\maketitle

\section{Introduction}
\thispagestyle{empty}

Let $k$ be a number field, $K/k$ a finite Galois extension
with Galois group $G$, $\chi$ a faithful character of $G$.  In Theorem
\ref{thm6} we prove that the Artin $L$-function $L(s,\chi,K/k)$
determines the Galois closure $\tilde{K}$ of $K$ over $\Q$. In the
special case $k=\Q$ we prove in Theorem \ref{thm5} that the Artin
$L$-function determines $K$ and the (faithful) character $\chi$. We give
examples that in the case $k\ne \Q$ we cannot expect more, especially there exist 
non-isomorphic arithmetically equivalent fields which cannot be distinguished
by Artin $L$-functions.

The restriction to faithful characters is natural: let $K/k$ be a
finite normal extension with $\Gal(K/k)=G$, and let $\chi$ be a
character of $G$ with $\Ker(\chi)\neq \{1\}$. Let $F$ be the fixed
field of $\Ker(\chi)$, $H:=G/\Ker(\chi)$ the Galois group of $F/k$,\;
$\varphi:H\to \C$, $\varphi(\sigma\Ker(\chi)):=\chi(\sigma)$ for
$\sigma\in G$. We have that
$$L(s,\chi,K/k)=L(s,\varphi,F/k),$$  
and $\varphi$ is faithful.

As particular cases we obtain that the Dedekind zeta function
of a number field determines its normal closure (\cite{Pe}, Theorem 1,
p. 345) and that a Galois number field is determined by any Artin
$L$-function corresponding to a character which contains all irreducible
characters of the Galois group, the result of \cite{Ni2}.

\section{Properties of Artin $L$-Functions}

We do not give the definition of Artin $L$-functions, but
we recall some fundamental properties of Artin $L$-functions needed in  the sequel. Note
that Artin $L$-functions are generalizations of Dedekind zeta functions $\zeta_K$ via
$$L(s,1,K/K) = \zeta_K(s),$$
where $K$ is a number field and $1$ is the trivial character of the trivial group $\Gal(K/K)$. We get further possibilities
to write a Dedekind zeta function as a Artin $L$-function by using Propositions \ref{prop1} and \ref{prop2}.

\begin{prop}\label{prop1}
Let $k$ be a number field, $K/k$ a finite Galois extension with Galois
group $G$, $\chi$ a character of $G$. Let $N$ be a finite Galois
extension of $k$ which contains $K$, $U:=\Gal(N/k)$,
$V:=\Gal(N/K)$. We identify the groups $G$ and
$U/V$. Let $$\tilde{\chi}:U\to\mathbb C,\;\tilde{\chi}(\sigma):=\chi(\sigma V).$$ Then we have
$$L(s,\tilde{\chi},N/k)= L(s,\chi,K/k).$$
\end{prop}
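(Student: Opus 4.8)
The plan is to reduce this to the standard inflation/invariance behaviour of Artin $L$-functions under enlarging the top field. First I would recall that for a prime $\mathfrak{p}$ of $k$ unramified in $N$ (hence in $K$), the Euler factor of $L(s,\chi,K/k)$ at $\mathfrak{p}$ is $\det\bigl(1 - \rho(\mathrm{Frob}_{\mathfrak{P}/\mathfrak{p}}) N\mathfrak{p}^{-s}\bigr)^{-1}$, where $\rho$ is a representation of $G = U/V$ affording $\chi$, $\mathfrak{P}$ is a prime of $K$ above $\mathfrak{p}$, and $\mathrm{Frob}_{\mathfrak{P}/\mathfrak{p}}$ is the corresponding Frobenius class in $G$. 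The key compatibility is that under the projection $U \to U/V = G$, a Frobenius element of a prime of $N$ above $\mathfrak{p}$ maps to a Frobenius element of the induced prime of $K$ above $\mathfrak{p}$. Since $\tilde{\chi}$ is by definition the inflation $\chi \circ (\,\cdot\,V)$, the composite representation $\tilde\rho := \rho \circ (\,\cdot\,V)$ affords $\tilde\chi$, and $\tilde\rho(\mathrm{Frob}) = \rho(\mathrm{Frob}\cdot V)$ — so the two Euler factors at every unramified $\mathfrak{p}$ coincide.

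Next I would deal with the ramified primes and the archimedean places. The cleanest route is to invoke Artin's characterization of the local factors via local Artin $L$-functions: for each place $v$ of $k$, the local factor is built from the restriction of $\rho$ to a decomposition group, together with the inertia invariants. Because inflating $\rho$ along $U \to G$ and restricting to a decomposition subgroup of $U$ is the same as first restricting $\rho$ to the corresponding decomposition subgroup of $G$ (the projection $U \to G$ sends the decomposition group $D_{\mathfrak{P}'}(N/k)$ onto $D_{\mathfrak{P}}(K/k)$ and inertia onto inertia), the local representations agree up to the kernel $V$, on which $\tilde\rho$ is trivial; hence the spaces of inertia invariants, the conductor exponents, and the local factors all match. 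Assembling over all places gives $L(s,\tilde\chi, N/k) = L(s,\chi,K/k)$, including the correct behaviour of conductors and Gamma factors.

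Alternatively — and this is probably the slickest proof — I would avoid place-by-place bookkeeping entirely and appeal to Artin's induction/inflation formalism: the function $L(s,\cdot, N/k)$ is defined on all of $R(U)$ (the representation ring) and is known to be compatible with inflation from quotients, which is exactly the statement that for a normal subgroup $V \trianglelefteq U$ and a character $\psi$ of $U/V$ one has $L(s, \mathrm{Infl}_{U/V}^U \psi, N/k) = L(s, \psi, N^V/k)$. Here $N^V = K$ and $\mathrm{Infl}_{U/V}^U \chi = \tilde\chi$, so the proposition is immediate. The only real point to be careful about is that this inflation-invariance is part of the standard Artin formalism (it follows from the fact that $L(s,\chi,K/k)$ depends only on the representation of $G$ and the arithmetic of $K/k$, which is recovered intrinsically from $N/k$ via class field theory / decomposition groups); I expect the main — though entirely routine — obstacle to be spelling out the Frobenius-compatibility under $U \to U/V$ at the ramified places carefully enough that the conductors are seen to agree, not merely the Euler products away from ramification.
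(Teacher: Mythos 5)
Your proposal is correct and ultimately rests on the same fact the paper invokes: the paper's proof consists of citing Artin's inflation-invariance of $L$-functions (formula (8) of Artin's 1931 paper), which is exactly the ``slickest proof'' you describe in your final paragraph. The additional Frobenius- and decomposition-group bookkeeping you sketch is a reasonable unwinding of why that formula holds, but it is not needed beyond the citation.
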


\begin{proof} This follows straightforward from the definition of $L$-functions: \cite{Ar}, p. 297, formula (8).
\end{proof}

\begin{prop}\label{prop2}
 Let $k$ be a number field, $K/k$ a finite Galois extension with Galois group $G$. Let $k\subseteq F\subseteq K$ be an intermediate 
 field, $H:=\Gal(K/F)$, $\chi$ a character of $H$, and $\chi^G$ be the induced character on $G$. Then we have
 $$L(s,\chi^G,K/k)=L(s,\chi, K/F).$$
\end{prop}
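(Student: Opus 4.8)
The assertion is the induction-invariance of Artin $L$-functions --- one of the three formal properties on which the theory is built, the others being additivity in the character and the inflation property already recorded in Proposition~\ref{prop1}. It is due to Artin, \cite{Ar}, and the most economical argument is simply to quote it from there, exactly as was done for Proposition~\ref{prop1}. If instead one wants to derive it from the definition, the plan is a purely local computation: both sides are Euler products over the primes of $k$, so it suffices to fix a prime $\mathfrak p$ of $k$ and to show that the local factor of $L(s,\chi^G,K/k)$ at $\mathfrak p$ equals the product, over the primes $\mathfrak q$ of $F$ lying above $\mathfrak p$, of the local factors of $L(s,\chi,K/F)$ at $\mathfrak q$.

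First I would set up the arithmetic dictionary. Fix a prime $\mathfrak P$ of $K$ above $\mathfrak p$, let $D\subseteq G$ be its decomposition group, $I\subseteq D$ its inertia group, and $\sigma\in D/I$ a Frobenius. The primes $\mathfrak q_1,\dots,\mathfrak q_r$ of $F$ above $\mathfrak p$ correspond to the double cosets in $H\backslash G/D$; choosing representatives $g_1,\dots,g_r$ and setting $\mathfrak P_i:=g_i^{-1}\mathfrak P$, one has $\mathfrak q_i=\mathfrak P_i\cap F$, with decomposition group $H\cap g_i^{-1}Dg_i$ and inertia group $H\cap g_i^{-1}Ig_i$ for $\mathfrak q_i$ over $\mathfrak p$ inside $H=\Gal(K/F)$, and a Frobenius for $\mathfrak q_i$ obtained from the power $(g_i^{-1}\sigma g_i)^{f_i}$, $f_i$ the residue degree of $\mathfrak q_i$ over $\mathfrak p$; all the ramification indices and residue degrees involved are then reconciled by the usual tower formulas.

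Then comes the representation-theoretic step. By Mackey's double-coset formula, the restriction to $D$ of the induced module $\Ind^G_H(\chi)$ decomposes as a direct sum of $r$ submodules, the one attached to $\mathfrak q_i$ being the induction up to $D$, from a subgroup $G$-conjugate to the decomposition group of $\mathfrak P_i$ over $\mathfrak q_i$, of the corresponding conjugate of $\chi$. Taking $I$-invariants on both sides and computing $\det\bigl(1-\sigma\,N\mathfrak p^{-s}\mid(\,\cdot\,)^I\bigr)^{-1}$ term by term, one checks that the $i$-th term is exactly the local factor of $L(s,\chi,K/F)$ at $\mathfrak q_i$; this uses that forming $I$-invariants is compatible with the Mackey summands and that, locally, inducing and then taking the characteristic polynomial of Frobenius on the inertia-invariants agrees with taking the product of the local factors over the primes above. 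At primes $\mathfrak p$ unramified in $K$ this reduces to the elementary fact that the characteristic polynomial of a Frobenius acting on an induced module factors according to the double-coset (cycle-type) decomposition; the main obstacle is the ramified case, where keeping the inertia subgroups and the only-modulo-inertia well-defined Frobenius classes consistent under the double-coset reindexing is the technical heart of Artin's theorem --- which is exactly why, in practice, I would cite \cite{Ar} rather than carry the computation out in full.
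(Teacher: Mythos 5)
Your proposal is correct and takes the same route as the paper, which likewise disposes of this statement by citing Artin's original proof (\cite{Ar}, p.~297, formula (9)). The supplementary sketch --- reducing to a prime-by-prime comparison of Euler factors, matching the primes of $F$ above $\mathfrak{p}$ with the double cosets in $H\backslash G/D$, and invoking Mackey's formula together with compatibility of inertia-invariants to identify the local factors --- is an accurate outline of Artin's argument, with the ramified case correctly identified as the technical crux.
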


\begin{proof} This is a deep property proved by Emil Artin: \cite{Ar}, p. 297, formula (9).
\end{proof}

\begin{prop} \label{prop3}
 Let $K/\Q$ be a finite Galois extension with Galois group $G$, $\chi$ and $\varphi$ characters of $G$. If 
 $$L(s,\chi, K/\Q)=L(s,\varphi, K/\Q)$$ then
 $$\chi=\varphi.$$
\end{prop}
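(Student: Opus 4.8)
The plan is to recover each multiplicity $\langle \chi, \rho\rangle$ from the $L$-function, where $\rho$ ranges over the irreducible characters of $G$. The key tool is the factorization of Artin $L$-functions into products of $L$-functions attached to irreducible characters: writing $\chi = \sum_\rho a_\rho \rho$ with $a_\rho \in \Z_{\ge 0}$, Artin's formalism gives
$$L(s,\chi,K/\Q) = \prod_\rho L(s,\rho,K/\Q)^{a_\rho},$$
and similarly for $\varphi$ with exponents $b_\rho$. So the hypothesis becomes the identity of the two infinite products, and the task is to conclude $a_\rho = b_\rho$ for all $\rho$.

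First I would use the known analytic behavior of $L(s,\rho,K/\Q)$ for irreducible $\rho$: each such $L$-function is holomorphic and non-vanishing on a suitable region, except that for $\rho = 1$ the function $\zeta_K$ (or rather $\zeta_\Q = \zeta(s)$, since here $K/\Q$ is Galois and $L(s,1,K/\Q) = \zeta(s)$) has a simple pole at $s=1$. Comparing the order of the pole of both sides at $s=1$ immediately yields $a_1 = b_1$. To separate the remaining characters, the cleanest route is to pass to logarithms and compare Dirichlet series coefficients: for $\Re(s)$ large,
$$\log L(s,\chi,K/\Q) = \sum_{\goth p} \sum_{m\ge 1} \frac{1}{m} \frac{\chi(\mathrm{Frob}_{\goth p}^m)}{N{\goth p}^{ms}} + (\text{ramified terms}),$$
so the coefficients encode the values $\chi(\mathrm{Frob}_{\goth p})$ at unramified primes. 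By the Chebotarev density theorem, the Frobenius classes $\mathrm{Frob}_{\goth p}$ exhaust every conjugacy class of $G$, hence the $L$-function determines the function $g \mapsto \chi(g)$ on all of $G$ up to the finitely many ramified primes — which contribute only to finitely many coefficients and can be absorbed or circumvented by taking $s \to \infty$ along the real axis and extracting leading asymptotics, or more carefully by a density argument that ignores a set of primes of density zero. Once $\chi$ and $\varphi$ agree as class functions on all Frobenius classes, i.e. on all conjugacy classes, they agree as characters, giving $\chi = \varphi$.

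The main obstacle is handling the ramified primes and the local factors at them rigorously, since a priori two different characters could conceivably have $L$-functions differing only through finitely many Euler factors; the resolution is that Chebotarev supplies infinitely many unramified primes in every conjugacy class, so the unramified part of the Dirichlet series already pins down $\chi$ completely, and the finitely many ramified Euler factors cannot alter this. An alternative, slicker argument avoids Dirichlet coefficients entirely: since the $L(s,\rho,K/\Q)$ for distinct irreducible $\rho$ are multiplicatively independent (a consequence of Artin's conjecture being known to hold in the weak form needed here — namely that they are distinct as meromorphic functions and no nontrivial multiplicative relation holds, which follows from linear independence of the characters together with the Chebotarev/Dirichlet-coefficient comparison just sketched), the exponents in the factorization are uniquely determined, forcing $a_\rho = b_\rho$. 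I would present the Chebotarev-based coefficient comparison as the main argument, as it is self-contained and does not invoke unproven cases of Artin's conjecture.
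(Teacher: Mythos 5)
Your argument is correct and is exactly the route the paper takes: its proof of Proposition \ref{prop3} consists of the one-line remark that the claim ``follows from the definition of Artin $L$-functions and Tschebotarev's density theorem,'' and your logarithmic-derivative/Dirichlet-coefficient comparison at unramified primes, combined with Chebotarev to hit every conjugacy class, is the standard way to make that remark precise (the paper also cites linear independence of distinct $L$-functions as an alternative, which matches your ``slicker'' variant). One point worth making explicit is that reading off $\chi(\mathrm{Frob}_p)$ from the coefficient of $p^{-s}$ uses that each rational prime contributes a single Euler factor, i.e.\ that the base field is $\Q$ --- precisely the hypothesis whose necessity the paper illustrates with the $S_3$ counterexample following the proposition.
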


\begin{proof} This follows from the definition of Artin $L$-functions and Tschebotarev's  density theorem. Alternately,  
it was proved in \cite{Ni1}, p. 179, Theorem 1 that if $\chi\neq\varphi$ then the functions $L(s,\chi, K/\Q)$ and
$L(s,\varphi, K/\Q)$ 
are linearly independent over $\C$. 
\end{proof}
We remark that Proposition \ref{prop3} is not true for Artin $L$-functions in Galois extensions $K/k$ with base $k\neq \Q$. E.g. let $K/\Q$
be a normal $S_3$--extension and denote by $k$ the unique quadratic subfield of $K/\Q$. Then $\Gal(K/k) = C_3$, the cyclic group of order 3.
Denote by $\chi_2$ and $\chi_3$ the two non-trivial characters of $C_3$. Then by Proposition \ref{prop2} we get for $j=2,3$:
$$L(s,\chi_j^{S_3},K/\Q)=L(s,\chi_j, K/k).$$
Using $\chi_2^{S_3}=\chi_3^{S_3}$ we get that $L(s,\chi_2,K/k) = L(s,\chi_3,K/k)$, but $\chi_2\ne\chi_3$.

\section{Results}

\begin{prop}\label{prop4}
Let $k$ be a number field, $K/k$ a finite Galois extension with Galois group $G$, and $\chi$ be a faithful character of $G$. Let $\tilde{K}$ 
be the Galois closure of $K$ over $\Q$, $\tilde{G}$ the Galois group of $\tilde{K}/\Q$, $U:=\Gal(\tilde{K}/k)$,  
and $V:=\Gal(\tilde{K}/K)$. We identify the groups $G$ and $U/V$. Let $$\tilde{\chi}:U\to\mathbb C,\;
\tilde{\chi}(\sigma):=\chi(\sigma V), $$ and let $\tilde{\chi}^{\tilde{G}}$ be the induced character of $\tilde{\chi}$ on $\tilde{G}$. 
Then $\tilde{\chi}^{\tilde{G}}$ is faithful, and 
$$L(s,\chi,K/k)=L(s, \tilde{\chi}^{\tilde{G}},\tilde{K}/\Q). $$
\end{prop}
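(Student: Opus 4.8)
The plan is to verify the two assertions separately: first that the $L$-functions agree, then that $\tilde\chi^{\tilde G}$ is faithful.

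The equality of $L$-functions is the easy half and follows by combining the two propositions already available. First I would apply Proposition \ref{prop1} with $N = \tilde K$: since $\tilde K/k$ is a finite Galois extension containing $K$, with $U = \Gal(\tilde K/k)$ and $V = \Gal(\tilde K/K)$, the lifted character $\tilde\chi$ satisfies $L(s,\tilde\chi,\tilde K/k) = L(s,\chi,K/k)$. Then I would apply Proposition \ref{prop2} to the tower $\Q \subseteq k \subseteq \tilde K$, with the intermediate field playing the role of $F$ and $U = \Gal(\tilde K/k)$ playing the role of $H$: the induced character $\tilde\chi^{\tilde G}$ on $\tilde G = \Gal(\tilde K/\Q)$ satisfies $L(s,\tilde\chi^{\tilde G},\tilde K/\Q) = L(s,\tilde\chi,\tilde K/k)$. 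Chaining the two equalities gives $L(s,\chi,K/k) = L(s,\tilde\chi^{\tilde G},\tilde K/\Q)$.

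The faithfulness of $\tilde\chi^{\tilde G}$ is the substantive point, and I expect it to be the main obstacle. The kernel of an induced character $\Ind_U^{\tilde G}\tilde\chi$ is the largest normal subgroup of $\tilde G$ contained in $\Ker\tilde\chi$; equivalently, it is $\bigcap_{g\in\tilde G} g\,\Ker(\tilde\chi)\,g^{-1}$ (the normal core of $\Ker\tilde\chi$ in $\tilde G$), since $\Ind$ of a character vanishes exactly on elements no conjugate of which lies in the subgroup, and on the subgroup the induced character's vanishing locus is governed by $\Ker\tilde\chi$ together with the contributions of other conjugates — care is needed here, so I would argue via the normal core directly. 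Now $\Ker\tilde\chi = \{\sigma\in U : \chi(\sigma V) = \chi(1)\}$, and since $\chi$ is faithful on $G \cong U/V$, this is exactly $V = \Gal(\tilde K/K)$. So $\Ker(\tilde\chi^{\tilde G})$ is the normal core of $V$ in $\tilde G$, i.e. the largest normal subgroup of $\tilde G$ contained in $V$. The fixed field of that subgroup is the Galois closure over $\Q$ of the fixed field of $V$, which is $K$; but the Galois closure of $K$ over $\Q$ is $\tilde K$ itself, whose corresponding subgroup is the trivial group. Hence the normal core of $V$ in $\tilde G$ is trivial, so $\Ker(\tilde\chi^{\tilde G}) = \{1\}$ and $\tilde\chi^{\tilde G}$ is faithful.

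The one place I would be careful is the claim that the kernel of an induced character equals the normal core of the kernel of the inducing character. This is standard but worth stating cleanly: writing $\psi = \tilde\chi^{\tilde G}$, an element $\sigma\in\tilde G$ lies in $\Ker\psi$ iff $\psi(\sigma) = \psi(1) = [\tilde G:U]\tilde\chi(1)$, and by the formula $\psi(\sigma) = \sum_{g} \tilde\chi^{\circ}(g^{-1}\sigma g)$ (sum over coset representatives, $\tilde\chi^{\circ}$ the extension of $\tilde\chi$ by zero off $U$), equality of absolute values forces $g^{-1}\sigma g\in U$ for all $g$ and then $\tilde\chi(g^{-1}\sigma g) = \tilde\chi(1)$ for all $g$, i.e. $\sigma\in\bigcap_g g\,\Ker\tilde\chi\,g^{-1}$; the converse inclusion is immediate. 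Given this, the Galois-correspondence identification of the normal core of $V$ with $\Gal(\tilde K/\tilde K) = \{1\}$ finishes the argument.
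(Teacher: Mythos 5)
Your proposal is correct and follows essentially the same route as the paper: Proposition \ref{prop1} followed by Proposition \ref{prop2} for the $L$-function identity, and the identification $\Ker(\tilde{\chi}^{\tilde{G}})=\bigcap_{\sigma\in\tilde{G}}\sigma V\sigma^{-1}=1$ for faithfulness. The only difference is that you spell out the standard fact that the kernel of an induced character is the normal core of the kernel of the inducing character, which the paper uses without proof.
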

\begin{center}
\begin{tikzcd}
    \tilde{K} \arrow[-]{d}{V} \arrow[bend right,-]{dd}[swap]{U}  \arrow[bend left,-]{ddd}{\tilde G}\\
  K \arrow[-]{d}{G} \\
  k \arrow[-]{d} \\
  \Q\\
\end{tikzcd}
\end{center}

\begin{proof}
We have $$\Ker(\tilde{\chi})=V,$$ since $\chi$ is faithful.
We have $$\bigcap_{\sigma\in \tilde{G}}\sigma V\sigma^{-1}=1 ,$$ 
since $\tilde{K}$ is the Galois closure of $K$ over $\Q$.
We have 
$$\Ker(\tilde{\chi}^{\tilde{G}})=\bigcap_{\sigma\in \tilde{G}}\sigma\Ker(\tilde{\chi})\sigma^{-1}=
\bigcap_{\sigma\in \tilde{G}}\sigma V\sigma^{-1}=1,$$
hence $\tilde{\chi}^{\tilde{G}}$ is faithful. We have
$$L(s,\chi,K/k)=L(s,\tilde{\chi},\tilde{K}/k)=L(s, \tilde{\chi}^{\tilde{G}},\tilde{K}/\Q),$$
by Propositions \ref{prop1} and \ref{prop2}.
\end{proof}

We prove that a Galois number field is determined by any Artin $L$-function corresponding to a faithful character of the Galois group.

\begin{theorem}\label{thm5}
Let $K_1/\Q$, $K_2/\Q$ be finite Galois extensions. For $j=1,2$ let $G_j$ be the 
Galois group of $K_j/\Q$, $\chi_j$ a faithful character 
of $G_j$. If 
$$L(s,\chi_1,K_1/\Q)=L(s,\chi_2,K_2/\Q) $$
then
$$K_1=K_2 \mbox{ and } \chi_1=\chi_2.$$
\end{theorem}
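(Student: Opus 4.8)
The plan is to reduce everything to the rigidity statement of Proposition~\ref{prop3}, which forces equality of characters once the ambient Galois field is fixed. The obstacle is that $K_1$ and $K_2$ are a priori different, so the two $L$-functions live over different groups; I must first produce a common Galois field over $\Q$ in which both can be expressed, and then pin down that this common field is actually $K_1=K_2$.

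First I would pass to a compositum. Let $N:=K_1K_2$, a finite Galois extension of $\Q$ with group $G:=\Gal(N/\Q)$, and set $V_j:=\Gal(N/K_j)$, so that $G_j\cong G/V_j$. Inflating along $G\twoheadrightarrow G_j$, define $\psi_j\colon G\to\C$ by $\psi_j(\sigma):=\chi_j(\sigma V_j)$. By Proposition~\ref{prop1} we have $L(s,\psi_j,N/\Q)=L(s,\chi_j,K_j/\Q)$ for $j=1,2$, so the hypothesis gives
$$L(s,\psi_1,N/\Q)=L(s,\psi_2,N/\Q).$$
Now Proposition~\ref{prop3}, applied to the Galois extension $N/\Q$, yields $\psi_1=\psi_2$ as characters of $G$.

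Next I would extract the field equality from $\psi_1=\psi_2$. The key point is that $\Ker(\psi_j)=V_j$: indeed $\psi_j$ is the inflation of the \emph{faithful} character $\chi_j$ of $G/V_j$, so its kernel is exactly the subgroup $V_j$ we quotiented by. Hence $V_1=\Ker(\psi_1)=\Ker(\psi_2)=V_2$, and taking fixed fields in $N$ gives $K_1=N^{V_1}=N^{V_2}=K_2$. Write $K:=K_1=K_2$ and $G_0:=\Gal(K/\Q)=G/V_1$. Finally, since $\psi_1=\psi_2$ are inflations from $G_0$ of $\chi_1$ and $\chi_2$ respectively, and inflation along the surjection $G\to G_0$ is injective on characters, we get $\chi_1=\chi_2$ as characters of $G_0$. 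This completes the proof.

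The main obstacle, as anticipated, is the bookkeeping that identifies the kernels: everything hinges on the observation that faithfulness of $\chi_j$ is precisely what makes $\Ker(\psi_j)=V_j$ rather than something larger, which is what allows the purely character-theoretic conclusion $\psi_1=\psi_2$ to be upgraded to the field-theoretic conclusion $K_1=K_2$. (One should also note that the result genuinely uses $k=\Q$ through Proposition~\ref{prop3}, whose failure over general base fields is exactly the $S_3$ phenomenon recorded after that proposition.)
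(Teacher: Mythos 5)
Your proposal is correct and follows essentially the same route as the paper: pass to the compositum $N=K_1K_2$, inflate, apply Proposition~\ref{prop3} to get equality of the inflated characters, and use faithfulness to identify the kernels with $\Gal(N/K_j)$ and hence conclude $K_1=K_2$. The only cosmetic difference is at the very end, where you invoke injectivity of inflation to deduce $\chi_1=\chi_2$ while the paper simply applies Proposition~\ref{prop3} once more; both are valid.
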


\begin{proof}
Let $N:=K_1\cdot K_2$. For $j=1,2$ let 
$$\tilde{\chi_j}:\Gal(N/\Q)\to\mathbb C,\;
\tilde{\chi_j}(\sigma):=\chi_j(\sigma\Gal(N/K_j)),$$
where we identify $G_j$ with the factor group 
$\Gal(N/\Q)/\Gal(N/K_j)$. Then $\tilde{\chi_j}$ is a
character of $\Gal(N/\Q)$, and
$$\Ker(\tilde{\chi_j})=\Gal(N/K_j),$$ since $\chi_j$ is faithful.  
We have  
$$ L(s,\chi_j,K_j/\Q)= L(s,\tilde{\chi_j},N/\Q),$$
by Proposition \ref{prop1}.
If 
$L(s,\chi_1,K_1/\Q)=L(s,\chi_2,K_2/\Q)$ then
$$ L(s,\tilde{\chi_1},N/\Q)= L(s,\tilde{\chi_2},N/\Q),$$
hence, by Proposition \ref{prop3}, $$\tilde{\chi_1}= \tilde{\chi_2}.$$
This implies
$$\Gal(N/K_1)=\Ker(\tilde{\chi_1})= \Ker(\tilde{\chi_2})=\Gal(N/K_2),$$
hence
$$K_1=K_2$$
and, by Proposition \ref{prop3},
$$\chi_1=\chi_2.$$
\end{proof}

In the final example of this paper we show that we cannot expect a similar result to Theorem \ref{thm5} for normal extensions $K_j/k$ for
$k \ne \Q$ and $j=1,2$. In the next theorem we give a version for relative extensions. 

\begin{theorem}\label{thm6}
Let $k_1$ and $k_2$ be number fields. For $j=1,2$ let $K_j/k_j$ be a finite Galois extension with the Galois group $G_j$, 
$\tilde{K_j}$ the normal closure of $K_j$ over $\Q $, $\chi_j$ a faithful character of $G_j$, $U_j:=\Gal(\tilde{K_j}/k_j)$, 
$V_j:=\Gal(\tilde{K_j}/K_j)$. We identify the groups $G_j$ and $U_j/V_j$. Let 
$$\tilde{\chi_j}:U_j\to\mathbb C,\;
\tilde{\chi_j}(\sigma):=\chi_j(\sigma V_j).$$ 
Let $\tilde{G_j}:= \Gal(\tilde{K_j}/ \Q)$, and let $\tilde{\chi_j}^{\tilde{G_j}}$ be the induced character of $\tilde{\chi_j}$ on $\tilde{G_j}$.  
If $$L(s,\chi_1,K_1/k_1)= L(s,\chi_2,K_2/k_2)$$ 
then $$\tilde{K_1}=\tilde{K_2}, \; \tilde{G_1} = \tilde{G_2}$$ and
$$\tilde{\chi_1}^{\tilde{G_1}}= \tilde{\chi_2}^{\tilde{G_2}}.$$
\end{theorem}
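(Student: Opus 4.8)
The plan is to reduce Theorem~\ref{thm6} to Theorem~\ref{thm5} by passing from the relative $L$-functions $L(s,\chi_j,K_j/k_j)$ to absolute Artin $L$-functions over $\Q$ using Proposition~\ref{prop4}. Concretely, for $j=1,2$, Proposition~\ref{prop4} gives that $\tilde{\chi_j}^{\tilde{G_j}}$ is a \emph{faithful} character of $\tilde{G_j}=\Gal(\tilde{K_j}/\Q)$ and that
$$L(s,\chi_j,K_j/k_j)=L(s,\tilde{\chi_j}^{\tilde{G_j}},\tilde{K_j}/\Q).$$
Hence the hypothesis $L(s,\chi_1,K_1/k_1)=L(s,\chi_2,K_2/k_2)$ translates directly into
$$L(s,\tilde{\chi_1}^{\tilde{G_1}},\tilde{K_1}/\Q)=L(s,\tilde{\chi_2}^{\tilde{G_2}},\tilde{K_2}/\Q).$$

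Now I would apply Theorem~\ref{thm5} to the two Galois extensions $\tilde{K_1}/\Q$ and $\tilde{K_2}/\Q$ with the faithful characters $\tilde{\chi_1}^{\tilde{G_1}}$ and $\tilde{\chi_2}^{\tilde{G_2}}$. That theorem yields at once $\tilde{K_1}=\tilde{K_2}$, hence $\tilde{G_1}=\Gal(\tilde{K_1}/\Q)=\Gal(\tilde{K_2}/\Q)=\tilde{G_2}$, and $\tilde{\chi_1}^{\tilde{G_1}}=\tilde{\chi_2}^{\tilde{G_2}}$, which is exactly the asserted conclusion. So the argument is essentially a two-line composition of Proposition~\ref{prop4} (applied twice) and Theorem~\ref{thm5}.

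The only point needing a little care — and the closest thing to an obstacle — is that Theorem~\ref{thm5} is stated for a pair of Galois extensions with faithful characters of their Galois groups, so I must make sure the hypotheses match verbatim: the faithfulness of $\tilde{\chi_j}^{\tilde{G_j}}$ is supplied by Proposition~\ref{prop4}, and the equality of $L$-functions over $\Q$ is supplied by the same proposition together with the assumed equality of the relative $L$-functions. Once these are in place there is nothing further to prove; in particular the intermediate fields $k_1,k_2$ and the groups $G_1,G_2,U_j,V_j$ play no role beyond defining $\tilde{\chi_j}^{\tilde{G_j}}$, and need not be compared directly. I would therefore write the proof as: invoke Proposition~\ref{prop4} for $j=1,2$, substitute into the hypothesis, and invoke Theorem~\ref{thm5}.
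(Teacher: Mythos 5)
Your proposal is correct and follows exactly the paper's own argument: apply Proposition~\ref{prop4} twice to obtain faithful characters $\tilde{\chi_j}^{\tilde{G_j}}$ and the identity $L(s,\chi_j,K_j/k_j)=L(s,\tilde{\chi_j}^{\tilde{G_j}},\tilde{K_j}/\Q)$, then conclude by Theorem~\ref{thm5}. Nothing is missing.
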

\begin{center}
\begin{tikzcd}
  & \tilde{K}_1 = \tilde{K}_2 \arrow[-]{dl}[swap]{V_1}  \arrow[-]{dr}{V_2} \arrow[bend left,-]{ddl}[swap]{U_1} \arrow[bend right,-]{ddr}{U_2} \arrow[-]{ddd}{\tilde{G}_2}[swap]{\tilde{G}_1} \\
K_1\arrow[-]{d}[swap]{G_1} &  &  K_2\arrow[-]{d}{G_2}\\
k_1 \arrow[-]{dr}&  &  k_1\arrow[-]{dl}\\
   & \Q & \\
\end{tikzcd}
\end{center}

\begin{proof}

For $j=1,2$ the character $\tilde{\chi_j}^{\tilde{G_j}}$ is faithful and we have 
$$L(s,\chi_j, K_j/k_j)=L(s,\tilde{\chi_j}^{\tilde{G_j}},\tilde{K_j}/\Q), $$
by Proposition \ref{prop4}. If $$L(s,\chi_1,K_1/k_1)= L(s,\chi_2,K_2/k_2)$$ then 
$$L(s,\tilde{\chi_1}^{\tilde{G_1}},\tilde{K_1}/\Q)= L(s,\tilde{\chi_2}^{\tilde{G_2}},\tilde{K_2}/\Q),$$
hence $$\tilde{K_1}=\tilde{K_2}, \; \tilde{G_1} = \tilde{G_2}$$ and
$$\tilde{\chi_1}^{\tilde{G_1}}= \tilde{\chi_2}^{\tilde{G_2}},$$
by Theorem \ref{thm5}.
\end{proof}    

We obtain now the well-known result (\cite{Pe}, Theorem 1, p. 345) that the zeta function of an algebraic number field determines its 
normal closure.

\begin{cor}
Let $K_1$ and $K_2$ be number fields. For $j=1,2$ let $\tilde{K_j}$ be the normal closure of $K_j$ over $\Q $, $V_j:=\Gal(\tilde{K_j}/K_j)$, 
$\tilde{G_j}:= \Gal(\tilde{K_j}/ \Q)$, and let $1_{V_j}$ be the trivial character of $V_j$. If 
$$\zeta_{K_1}(s)=\zeta_{K_2}(s) $$
then  
$$\tilde{K_1}=\tilde{K_2},\, \tilde{G_1}= \tilde{G_2}$$
and 
$$1_{V_1}^{\tilde{G_1}}=1_{V_2}^{\tilde{G_2}}.$$
\end{cor}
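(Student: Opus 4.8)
The plan is to derive the corollary as a direct specialization of Theorem \ref{thm6}, taking $k_1 = k_2 = \Q$ and $K_j$ themselves as the base extensions, but with a twist: a general number field $K_j$ need not be Galois over $\Q$, so we cannot literally apply Theorem \ref{thm6} with $K_j/k_j$ a Galois extension. Instead I would first rewrite each Dedekind zeta function as an Artin $L$-function of a faithful character. The standard way: set $\tilde{K_j}$ to be the Galois closure of $K_j/\Q$, $\tilde{G_j} := \Gal(\tilde{K_j}/\Q)$, $V_j := \Gal(\tilde{K_j}/K_j)$, and use $1_{V_j}$, the trivial character of $V_j$. By Proposition \ref{prop2} (with $F = K_j$, $H = V_j$, $\chi = 1_{V_j}$, and the ambient Galois group $\tilde{G_j}$) we get
$$L(s, 1_{V_j}^{\tilde{G_j}}, \tilde{K_j}/\Q) = L(s, 1_{V_j}, \tilde{K_j}/K_j) = \zeta_{K_j}(s),$$
the last equality because $L(s,1,\tilde{K_j}/K_j) = \zeta_{K_j}(s)$ as noted at the start of Section 2.

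Next I would check that $1_{V_j}^{\tilde{G_j}}$ is a faithful character of $\tilde{G_j}$. Its kernel is $\bigcap_{\sigma \in \tilde{G_j}} \sigma V_j \sigma^{-1}$, which is trivial precisely because $\tilde{K_j}$ is the Galois closure of $K_j$ over $\Q$ (this is the same computation as in the proof of Proposition \ref{prop4}). So each side of the hypothesis $\zeta_{K_1}(s) = \zeta_{K_2}(s)$ is now written as $L(s, \chi_j, K_j/\Q)$ with $K_j/\Q$ Galois (namely $K_j$ replaced by $\tilde{K_j}$) and $\chi_j := 1_{V_j}^{\tilde{G_j}}$ faithful. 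Then Theorem \ref{thm5} applies directly and yields $\tilde{K_1} = \tilde{K_2}$ (hence $\tilde{G_1} = \tilde{G_2}$) together with $1_{V_1}^{\tilde{G_1}} = 1_{V_2}^{\tilde{G_2}}$, which is exactly the claimed conclusion.

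There is really no serious obstacle here; the only point requiring a moment's care is keeping straight that the corollary is \emph{not} an instance of Theorem \ref{thm6} with $k_j = \Q$ applied naively (since $K_j/\Q$ need not be normal), but rather an instance of Theorem \ref{thm5} applied to the Galois closures after rewriting $\zeta_{K_j}$ via Proposition \ref{prop2}. Equivalently, one can phrase it as Proposition \ref{prop4} applied with $k = \Q$, $\chi = 1$ on the trivial group $\Gal(K_j/K_j)$ --- but that requires interpreting $K_j/K_j$ as the relevant Galois extension, which is a slightly degenerate setup, so I would prefer the Proposition \ref{prop2} route for clarity. Once the $\zeta_{K_j}$ are expressed as $L(s, 1_{V_j}^{\tilde{G_j}}, \tilde{K_j}/\Q)$ with faithful inducing data, the result is immediate from Theorem \ref{thm5}.
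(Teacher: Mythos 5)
Your proof is correct, but it is packaged differently from the paper's. The paper obtains the corollary as a literal specialization of Theorem \ref{thm6}: it takes $k_j:=K_j$, $G_j:=\{1\}$, $\chi_j:=1_{G_j}$ (trivially faithful), so that $L(s,\chi_j,K_j/K_j)=\zeta_{K_j}(s)$ and $\tilde{\chi_j}=1_{V_j}$, and then reads off the conclusion --- exactly the ``degenerate'' route you mention in your aside and choose to avoid. You instead unwind one layer: you express $\zeta_{K_j}(s)=L(s,1_{V_j}^{\tilde{G_j}},\tilde{K_j}/\Q)$ via Proposition \ref{prop2}, verify faithfulness of $1_{V_j}^{\tilde{G_j}}$ by the kernel-of-induced-character computation, and invoke Theorem \ref{thm5} directly. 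Since Theorem \ref{thm6} is itself proved by Proposition \ref{prop4} (which is Propositions \ref{prop1} and \ref{prop2} plus that same faithfulness computation) followed by Theorem \ref{thm5}, the two proofs are mathematically identical; yours trades the slickness of the trivial-extension trick for having to reprove the content of Proposition \ref{prop4} in this special case, and gains some transparency in return. Two small points: in your aside, the paper's specialization has base field $k=K_j$ (the extension is $K_j/K_j$), not $k=\Q$ as you wrote; and the equality $L(s,1_{V_j},\tilde{K_j}/K_j)=\zeta_{K_j}(s)$ is slightly more than the displayed identity $L(s,1,K/K)=\zeta_K(s)$ from Section 2 --- it follows from it by Proposition \ref{prop1}, lifting the trivial character of $\Gal(K_j/K_j)$ to $V_j$. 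Neither affects the validity of the argument.
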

\begin{proof}
For $j=1,2$ let  $k_j=K_j$, $G_j= \{1\}$, $\chi_j:=1_{G_j}$, $U_j:=V_j$. 
The character $\chi_j $ is faithful, and $$L(s,\chi_j, K_j/k_j)= \zeta_{K_j}(s).$$ We have 
$$\tilde{\chi_j}:U_j\to\mathbb C,\;
\tilde{\chi_j}(\sigma):=\chi_j(\sigma V_j)=\chi_j(1)=1,$$
hence $$ \tilde{\chi_j}=1_{V_j}.$$
If 
$$ \zeta_{K_1}(s)=\zeta_{K_2}(s)$$
then 
$$L(s,\chi_1,K_1/k_1)= L(s,\chi_2,K_2/k_2), $$
hence
$$\tilde{K_1}=\tilde{K_2},\,\tilde{G_1}= \tilde{G_2}  $$
and
$$1_{V_1}^{\tilde{G_1}}=1_{V_2}^{\tilde{G_2}},$$
by Theorem \ref{thm6}.
\end{proof}                     
We finish with an example which shows that all Artin $L$-functions in the Galois closures of the corresponding fields coincide.
Let $k=k_1=k_2:=\Q(\sqrt[4]{3})$, $K_1:=\Q(\sqrt[8]{3})$, $K_2:=\Q(\sqrt[8]{16\cdot 3})$. The extensions $K_1/k$ and $K_2/k$ are Galois with Galois groups 
$G_1$ and $G_2$ of order 2.
The fields $K_1$ and $K_2$ are non-isomorphic and have the same zeta function ( \cite{Pe}, p. 350 ):
$$L(s,\Reg_{G_1}, K_1/k)=\zeta_{K_1}(s)= \zeta_{K_2}(s)=L(s,\Reg_{G_2}, K_2/k).$$ For $j=1,2$ we have 
$$\Reg_{G_j}=1_{G_j}+\chi_j, $$ where $\chi_j$ is the non-trivial irreducible character of $G_j$. We have 
$$L(s,1_{G_1}, K_1/k)=\zeta_k(s)= L(s,1_{G_2}, K_2/k),$$hence  
$$L(s,\chi_1, K_1/k)=L(s,\chi_2, K_2/k).$$ It follows that the Artin $L$-functions of $ K_1/k$ are identical with the Artin $L$-functions of $ K_2/k$. We have 
$$\tilde{K_1}=\tilde{K_2},\,\tilde{G_1}= \tilde{G_2}=G,$$
by Theorem 6, since $\Reg_{G_1}$ and $\Reg_{G_2}$ are faithful. Using the notation of Theorem \ref{thm6}, there is an outer automorphism $\alpha$ of $G$ such that 
$$V_2=\alpha(V_1).$$ The Artin $L$-functions of $\tilde{K_1}/K_1$ are identical with the Artin $L$-functions of $\tilde{K_2}/K_2$. This shows that $K_1$ and $K_2$ cannot be distinguished 
by Artin $L$-functions of $\tilde{K_1}/K_1$ and $\tilde{K_2}/K_2$, nor by Artin $L$-functions of $K_1/k$ and $K_2/k$.

\end{document}